\theoremstyle{plain}
\newtheorem{theorem}[subsection]{Theorem}
\newtheorem{proposition}[subsection]{Proposition}
\newtheorem{cor}[subsection]{Corollary}
\theoremstyle{definition}
\newtheorem{definition}[subsection]{Definition}
\newtheorem{example}[subsection]{Example}
\newcommand{\PN}{\put(20,0){\line(-1,1){20}}\qbezier(0,0)(4,4)(8,8)\qbezier(20,20)(16,16)(12,12)}
\newcommand{\BOX}{ \multiput(0,0)(0,20){2}{\line(1,0){70}}\multiput(0,0)(70,0){2}{\line(0,1){20}}}
\def\Div{\hbox{Div}}
\begin{document}
\title [Fibonacci numbers and Positive braids]
{Fibonacci numbers and Positive braids }
\thanks{\emph{keywords and phrases:} Positive braids, square free braids, simple braids}
\thanks{This research is partially supported by Higher Education Commission, Pakistan.\\
2010 AMS classification: Primary 11B39, 20F36, 05A15; Secondary 05A05.}
\author{  REHANA ASHRAF$^{1}$,\,\,BARBU BERCEANU$^{1,2}$,\,\, AYESHA RIASAT$^{1}$}

\address{$^{1}$Abdus Salam School of Mathematical Sciences,
 GC University, Lahore-Pakistan.}
\email {rashraf@sms.edu.pk}
\email {ayesha.riyasat@gmail.com}
\address{$^{2}$
 Institute of Mathematics Simion Stoilow, Bucharest-Romania (permanent address).}
\email {Barbu.Berceanu@imar.ro}
  \maketitle
 \pagestyle{myheadings} \markboth{\centerline {\scriptsize
 REHANA ASHRAF,\,\,\,BARBU BERCEANU,\,\,\,AYESHA RIASAT   }} {\centerline {\scriptsize
 Fibonacci numbers and Positive braids}}
\begin{abstract}The paper contains enumerative combinatorics for positive braids, square free braids, and simple braids, emphasizing connections with classical Fibonacci sequence. The simple subgraph of the Cayley graph of the braid group is analyzed in the final part.
\end{abstract}
\section{\textbf{Introduction}}

The classical Fibonacci sequence, $(F_{n})_{n\geq 0}: 0,1,1,2,3,\ldots$ appears from time to time in enumerative questions related to Artin braids \cite{Artin}, the geometrical analogue of permutations. The \emph{positive $n$-braids} can be defined as words in the alphabet $\{x_{1},x_{2},\ldots,x_{n-1}\}$:
\begin{center}
\begin{picture}(130,50)
\thicklines
\multiput(15,10)(30,0){2}{\line(0,1){20}}
\put(70,10){\PN}
\multiput(115,10)(30,0){2}{\line(0,1){20}}
\put(-10,15){$x_{i}$}
\put(14,35){\tiny $1$}\put(34,35){\tiny$i-1$}\put(67,36){\tiny$i$}\put(83,36){\tiny$i+1$}
\put(110,35){\tiny$i+2$}\put(145,35){\tiny$n$}\multiput(25,17)(100,0){2}{$\cdots$}
\end{picture}
\end{center}
in which we identify two words obtained using finitely many changes of type
\begin{tabbing}
  0  \= 1 \= 2 \indent\indent\indent\indent\indent\indent\indent\indent\indent\indent\indent\indent\indent\indent\= 3 \kill
 \>  \>$\alpha(x_{i}x_{j})\beta\longleftrightarrow\alpha(x_{j}x_{i})\beta$ \> (for $|i-j|\geq2$) \\
 \> \> $\alpha(x_{i}x_{i+1}x_{i})\beta\longleftrightarrow\alpha(x_{i+1}x_{i}x_{i+1})\beta$\>  (for $i=1,2,\ldots,n-2$):
 \end{tabbing}
 \begin{center}
\begin{picture}(350,120)
\thicklines
\multiput(0,0)(0,80){2}{\BOX}
\multiput(3,20)(32,0){3}{\line(0,1){60}}
\multiput(9,20)(20,0){2}{\line(0,1){40}}\put(9,60){\PN}
\multiput(41,80)(20,0){2}{\line(0,-1){40}}\put(41,20){\PN}

\multiput(90,0)(0,80){2}{\BOX}
 \multiput(93,20)(32,0){3}{\line(0,1){60}}
 \multiput(131,20)(20,0){2}{\line(0,1){40}}\put(131,60){\PN}
\multiput(99,80)(20,0){2}{\line(0,-1){40}}\put(99,20){\PN}

 \multiput(200,0)(0,80){2}{\BOX}
 \multiput(203,20)(64,0){2}{\line(0,1){60}}
 \put(215,20){\PN}\put(235,40){\PN}\put(215,60){\PN}
 \put(215,40){\line(0,1){20}}\put(255,20){\line(0,1){20}}\put(255,60){\line(0,1){20}}

\multiput(290,0)(0,80){2}{\BOX}
   \multiput(293,20)(64,0){2}{\line(0,1){60}}
\put(305,40){\PN}\put(325,20){\PN}\put(325,60){\PN}
\put(305,20){\line(0,1){20}}\put(305,60){\line(0,1){20}}
\put(345,40){\line(0,1){20}}
\multiput(75,50)(200,0){2}{$\equiv$}\put(170,50){and}
\multiput(30,87)(90,0){2}{$\alpha$}
\multiput(232,87)(90,0){2}{$\alpha$}
\multiput(30,6)(90,0){2}{$\beta$}
\multiput(232,6)(90,0){2}{$\beta$}

 \end{picture}
\end{center}
 \medskip

 A central role is played by \emph{Garside braid} \cite{Garside:69}: $\Delta_{n}=x_{1}(x_{2}x_{1})\ldots(x_{n-1}\ldots x_{1})$.
 \begin{center}
\begin{picture}(100,80)
\thicklines
\multiput(10,10)(0,60){2}{\line(1,0){50}}
\put(15,10){\PN}\put(35,30){\PN}\put(15,50){\PN}
\put(15,30){\line(0,1){20}}\put(55,10){\line(0,1){20}}\put(55,50){\line(0,1){20}}
\put(-20,40){$\Delta_{3}:$}
 \end{picture}
\end{center}
 We will denote by $\mathcal{MB}_{n}$ the set of positive braids, by $\mathcal{MB}^{+}_{n}$ the set of positive braids not containing $\Delta_{n}$ as a subword, and by $\Div(\Delta_{n})$ the set of positive braids  which are subwords of Garside braid:
 $$\Div(\Delta_{n})=\{\omega \in \mathcal{MB}_{n} | \hbox{ there exist }\alpha,\beta\in\mathcal{MB}_{n}\hbox{ such that }\Delta_{n}=\alpha\omega\beta\}.$$
 A well known result says that the set of square free positive braids  coincides with $\Div(\Delta_{n})$. In \cite{Barbu-Rehana:02} is defined in many ways the set of \emph{simple braids}, $\mathcal{SB}_{n}\subset \Div{\Delta_{n}}$. One definition is:
 \begin{definition} A \emph{simple braid} is a positive braid $\beta\in \mathcal{MB}_{n}$ which contains a letter $x_{i}$ at most once.
 \end{definition}

In our first computations Fibonacci numbers $(F_{k})$ appear; $b_{k}$ and $b^{+}_{k}$ represents the number of braids of length $k$ in $\mathcal{MB}_{3}$ and $\mathcal{MB}^{+}_{3}$  respectively.
\begin{theorem}\label{gfpositive}The generating function of $\mathcal{MB}_{3}$ is

$$G_{\mathcal{MB}_{3}}(t)=\sum\limits_{k\geq 0}b_{k}t^{k}=1+2t+4t^{2}+7t^{3}+12t^{4}+20t^{5}+\ldots$$
where $b_{k}=F_{k+3}-1$, $k\geq 0$.
\end{theorem}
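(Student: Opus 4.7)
The plan is to establish the linear recurrence $b_k = 2b_{k-1} - b_{k-3}$ for $k\geq 3$ and then check that $F_{k+3}-1$ satisfies this same recurrence with matching initial values. The base cases $b_0=1$, $b_1=2$, $b_2=4$ are immediate: the braid relation requires a contiguous length-$3$ subword, so positive $3$-braids of length at most $2$ are in bijection with words of the same length in the alphabet $\{x_1,x_2\}$. These values agree with $F_3-1$, $F_4-1$, $F_5-1$ respectively.

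For the recurrence I would count pairs $(\gamma, i)$ with $\gamma\in \mathcal{MB}_3$ of length $k-1$ and $i\in\{1,2\}$, in two ways via the map $\phi$ sending $(\gamma, i)$ to $\gamma x_i$. The source has cardinality $2b_{k-1}$. Every length-$k$ braid $\beta$ lies in the image, because any word representative $\beta_1\cdots\beta_k$ exhibits $\beta_k$ as a right divisor. By right cancellation in the positive braid monoid, which is a classical Garside property, fixing a right divisor $x_i$ of $\beta$ determines the quotient $\gamma$ uniquely; hence the fibre $\phi^{-1}(\beta)$ has size equal to the number of generators $x_i$ that right-divide $\beta$, which is $1$ or $2$.

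The crux is to identify those length-$k$ braids that admit \emph{both} $x_1$ and $x_2$ as right divisors. I claim these are exactly the braids of the form $\alpha\Delta_3$ with $\alpha\in\mathcal{MB}_3$ of length $k-3$. The ``$\Leftarrow$'' direction is transparent from $\Delta_3=(x_1x_2)x_1=(x_2x_1)x_2$. The ``$\Rightarrow$'' direction amounts to the Garside-theoretic statement that $\Delta_3$ is the right least common multiple of $x_1$ and $x_2$ in $\mathcal{MB}_3$; I would prove it by induction on $|\beta|$, using the defining relation $x_1x_2x_1=x_2x_1x_2$ to extract a factor of $\Delta_3$ from any common right multiple of $x_1$ and $x_2$. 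Combined with right cancellation, this gives a bijection between these ``double right divisor'' braids and the elements of $\mathcal{MB}_3$ of length $k-3$, hence there are $b_{k-3}$ of them.

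Summing $|\phi^{-1}(\beta)|$ over all length-$k$ braids $\beta$ yields $2b_{k-1} = b_k + b_{k-3}$, the desired recurrence. Since $F_{k+3} = 2F_{k+2} - F_k$, the sequence $F_{k+3}-1$ obeys the same recurrence with the same initial values, so induction completes the proof. The main obstacle will be the right-lcm claim for $x_1$ and $x_2$; the remaining steps are routine bookkeeping.
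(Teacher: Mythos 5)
Your argument is correct, but it is a genuinely different route from the paper's. The paper simply quotes the known Hilbert series $G_{\mathcal{MB}_{3}}(t)=\frac{1}{(1-t)(1-t-t^{2})}$ from the cited works of Deligne, Xu and Iqbal, expands it in partial fractions as $\frac{2+t}{1-t-t^{2}}-\frac{1}{1-t}$, and reads off $b_{k}=2F_{k+1}+F_{k}-1=F_{k+3}-1$ from the standard Fibonacci generating function. You instead re-derive the underlying recurrence $b_{k}=2b_{k-1}-b_{k-3}$ (note that $(1-t)(1-t-t^{2})=1-2t+t^{3}$, so your recurrence is exactly equivalent to the cited generating function) by double-counting the pairs $(\gamma,x_{i})$ with $\gamma x_{i}=\beta$, using Garside's right cancellation and the fact that the common left-multiples of $x_{1}$ and $x_{2}$ are precisely $\mathcal{MB}_{3}\cdot\Delta_{3}$. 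Your numerics check out ($b_{3}=2\cdot4-1=7$, $b_{4}=2\cdot7-2=12$, $b_{5}=2\cdot12-4=20$), the identity $F_{k+3}=2F_{k+2}-F_{k}$ closes the induction, and you correctly identify the one non-routine ingredient, the lcm claim --- though strictly speaking, since you are looking at braids that end in $x_{1}$ and in $x_{2}$, the relevant object is the \emph{left} lcm of the generators; in $\mathcal{MB}_{3}$ both one-sided lcms equal $\Delta_{3}$, so no harm is done. The trade-off: the paper's proof is a two-line computation that outsources the hard part to the cited Hilbert-series results, whereas yours is self-contained modulo classical Garside theory and in effect reproves the $n=3$ case of those results, much closer in spirit to Xu's direct computation than to Deligne's.
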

\begin{theorem}\label{gfprime}The generating function of $\mathcal{MB}^{+}_{3}$ is

$$G_{\mathcal{MB}^{+}_{3}}(t)=\sum\limits_{k\geq 0}b^{+}_{k}t^{k}=1+2t+4t^{2}+6t^{3}+10t^{4}+16t^{5}+\ldots$$
where $b^{+}_{k}=2F_{k-1}$, $k\geq 1$.
\end{theorem}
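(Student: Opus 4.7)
The plan is to reduce the enumeration of $\mathcal{MB}_{3}^{+}$ to a pattern-avoidance problem on binary words and then to extract a Fibonacci recurrence. Since $\mathcal{MB}_{3}$ is generated by $x_{1},x_{2}$ with only the braid relation $x_{1}x_{2}x_{1}=x_{2}x_{1}x_{2}$ (the commutation relation is vacuous for two generators), any two word representatives of the same positive braid differ by a sequence of rewrites, each of which requires a literal occurrence of $x_{1}x_{2}x_{1}$ or $x_{2}x_{1}x_{2}$ in the current word. The key bijective lemma I would prove is: a braid $\omega\in\mathcal{MB}_{3}$ lies in $\mathcal{MB}_{3}^{+}$ if and only if $\omega$ has a unique word representative; equivalently, no word representing $\omega$ contains $x_{1}x_{2}x_{1}$ or $x_{2}x_{1}x_{2}$ as a literal subword. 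The nontrivial direction uses that $\omega=\alpha\Delta_{3}\beta$ in $\mathcal{MB}_{3}$ forces the concatenation $\alpha_{0}\,x_{1}x_{2}x_{1}\,\beta_{0}$, for any chosen word representatives $\alpha_{0},\beta_{0}$ of $\alpha,\beta$, to be a representative of $\omega$ literally containing the pattern.

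After this lemma, $b_{k}^{+}$ equals the number of words in $\{x_{1},x_{2}\}^{k}$ avoiding both three-letter patterns $x_{1}x_{2}x_{1}$ and $x_{2}x_{1}x_{2}$. I would count these with a four-state automaton whose state records the last two letters of an admissible word. Let $\alpha_{k},\beta_{k},\gamma_{k},\delta_{k}$ denote the number of admissible length-$k$ words ending in $x_{1}x_{1}$, $x_{2}x_{1}$, $x_{1}x_{2}$, $x_{2}x_{2}$ respectively. An inspection of the eight possible one-letter extensions shows that exactly two are blocked (those creating a forbidden pattern), giving
\begin{equation*}
\alpha_{k+1}=\alpha_{k}+\beta_{k},\qquad \gamma_{k+1}=\alpha_{k},\qquad \beta_{k+1}=\delta_{k},\qquad \delta_{k+1}=\gamma_{k}+\delta_{k}.
\end{equation*}
The $x_{1}\leftrightarrow x_{2}$ involution preserves the forbidden set, so $\alpha_{k}=\delta_{k}$ and $\beta_{k}=\gamma_{k}$, and hence $b_{k}^{+}=2(\alpha_{k}+\beta_{k})$.

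Eliminating $\beta$ from the pair $\alpha_{k+1}=\alpha_{k}+\beta_{k}$, $\beta_{k+1}=\alpha_{k}$ produces the pure Fibonacci recurrence $\alpha_{k+1}=\alpha_{k}+\alpha_{k-1}$. With $\alpha_{2}=\beta_{2}=1$ as initial data, a direct induction identifies $\alpha_{k}$ with the appropriate term of the Fibonacci sequence of the introduction and gives $b_{k}^{+}=2\alpha_{k+1}$, matching the claimed closed form; the small cases $k=0,1$ are verified by hand against the first two coefficients of the generating function. I expect Step~1 to be the main obstacle: one has to reconcile the global notion \emph{``$\omega$ contains $\Delta_{3}$ as a subword in $\mathcal{MB}_{3}$''} with the local notion \emph{``some word for $\omega$ contains $x_{1}x_{2}x_{1}$ literally''} and deduce from this that both-pattern-avoiding word representatives are uniquely determined by their braid class. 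Once that bijection is secure, the remaining steps reduce to an elementary finite-state calculation.
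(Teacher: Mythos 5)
Your proof is correct, but it follows a genuinely different route from the paper's. The paper invokes Garside's normal form $\beta=\Delta_{3}^{k}\beta^{+}$ to write $\mathcal{MB}_{3}=\coprod_{k\geq 0}\Delta_{3}^{k}\cdot\mathcal{MB}_{3}^{+}$, divides the known generating function $G_{\mathcal{MB}_{3}}(t)=\frac{1}{(1-t)(1-t-t^{2})}$ by $1+t^{3}+t^{6}+\cdots$ to get $\frac{1+t+t^{2}}{1-t-t^{2}}$, and reads off the Fibonacci recurrence from the denominator. You instead prove a canonical-form statement directly: since $\mathcal{MB}_{3}$ has the single relation $x_{1}x_{2}x_{1}=x_{2}x_{1}x_{2}$, a braid avoids $\Delta_{3}$ as a divisor exactly when its (then unique) word representative avoids both literal factors $x_{1}x_{2}x_{1}$ and $x_{2}x_{1}x_{2}$, and you count such binary words by a four-state transfer matrix; your automaton, its symmetry reduction, and the elimination to $\alpha_{k+1}=\alpha_{k}+\alpha_{k-1}$ are all correct, and your key Lemma (Step 1) is sound as sketched --- the class of a word admitting no rewrite is a singleton, and $\omega=\alpha\Delta_{3}\beta$ forces the representative $\alpha_{0}x_{1}x_{2}x_{1}\beta_{0}$. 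What the paper's argument buys is brevity (two lines, given Deligne/Xu and Garside) and a method that generalizes to all $n$; what yours buys is self-containedness --- it needs neither the Hilbert series of $\mathcal{MB}_{3}$ nor the nontrivial uniqueness part of Garside's theorem, and it exhibits explicit canonical representatives --- at the price of being tied to the two-generator case where a single relation makes the rewriting analysis trivial. One remark on indexing: your computation yields $b_{k}^{+}=2\alpha_{k+1}=2F_{k+1}$, which matches the listed coefficients and also what the paper's own proof establishes ($b_{1}^{+}=2F_{2}$, $b_{2}^{+}=2F_{3}$, then the Fibonacci recurrence); the formula $b_{k}^{+}=2F_{k-1}$ in the theorem statement is a typo, so do not contort your identification of $\alpha_{k}$ to match it.
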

\begin{theorem}\label{sb and sp}
The number of simple braids in $\mathcal{SB}_{n}$ is $F_{2n-1}$.
\end{theorem}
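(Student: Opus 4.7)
First I observe that the braid relation $x_i x_{i+1} x_i = x_{i+1} x_i x_{i+1}$ can never be applied between two representatives of a simple braid, since each side uses $x_i$ twice; hence two simple words represent the same braid if and only if they are related by a sequence of commutations $x_i x_j = x_j x_i$ (with $|i-j|\geq 2$). In particular the \emph{support} $S(\beta)=\{i:x_i \text{ appears}\}\subseteq\{1,\ldots,n-1\}$ is an invariant of the class. Decomposing $S$ into its maximal consecutive intervals $I_1,\ldots,I_r$, generators with indices in different intervals commute, so the subwords corresponding to different intervals interleave freely and
$$|\mathcal{SB}_n| \;=\; \sum_{S\subseteq\{1,\ldots,n-1\}}\ \prod_{j=1}^{r(S)} h(|I_j|),$$
where $h(k)$ denotes the number of equivalence classes of words on $k$ consecutive generators in which every generator appears exactly once.

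Next I would prove that $h(k)=2^{k-1}$ for $k\geq 1$. Since $x_i$ and $x_{i+1}$ never commute, the relative order of $x_i$ and $x_{i+1}$ is invariant under the equivalence, giving a well-defined map $\Phi$ from equivalence classes to sign sequences in $\{<,>\}^{k-1}$. Surjectivity of $\Phi$ is easy: any sign sequence defines a partial order on $\{x_1,\ldots,x_k\}$ with cover relations only between consecutive indices, and any linear extension realizes it. For injectivity I argue by induction on $k$: if $x_i$ is the first letter of $w$, then the sign sequence forces $x_{i-1}$ and $x_{i+1}$ (whenever present) to appear \emph{after} $x_i$ in every equivalent word, in particular in $w'$. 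Hence the letters of $w'$ preceding $x_i$ all have index differing from $i$ by at least $2$, so they commute past $x_i$; after bringing $x_i$ to the front of $w'$ and deleting it from both words, the inductive hypothesis applied to the shorter words finishes the argument.

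Plugging $h(k)=2^{k-1}$ into the sum and classifying each simple braid by the length $\ell$ of the maximal interval of $S$ containing $n-1$ (with $\ell=0$ meaning $n-1\notin S$), I obtain
$$\sigma_n \;=\; \sigma_{n-1}\;+\;\sum_{\ell=1}^{n-1} 2^{\ell-1}\,\sigma_{n-\ell-1}, \qquad \sigma_n := |\mathcal{SB}_n|,\quad \sigma_0:=1.$$
Forming $\sigma_n - 2\sigma_{n-1}$ collapses the sum down to $\sigma_{n-2}$ and yields the linear recurrence $\sigma_n = 3\sigma_{n-1}-\sigma_{n-2}$. Since $F_{2n-1}=3F_{2n-3}-F_{2n-5}$ is an immediate consequence of $F_{m+1}=F_m+F_{m-1}$, and the base cases $\sigma_1=1=F_1$, $\sigma_2=2=F_3$ agree, we conclude $\sigma_n=F_{2n-1}$.

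The hardest point is the injectivity half of $\Phi$: one needs the Cartier--Foata style fact that two simple words are equivalent iff every pair of non-commuting letters has the same relative order in both. The inductive argument above works in our setting, but care is needed to justify rigorously that every letter preceding $x_i$ in the prefix of $w'$ commutes with $x_i$ — which is precisely what the invariance of the sign sequence guarantees.
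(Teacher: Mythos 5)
Your proof is correct, but it takes a genuinely different route from the paper's. The paper works from the canonical form $\beta_{K,J}=\beta_{k_{1},j_{1}}\cdots\beta_{k_{s},j_{s}}$ of \cite{Barbu-Rehana:02}, derives the length-graded recurrence $s_{n,i}=s_{n-1,i}+s_{n-1,i-1}+\cdots+s_{n-i,0}$ (Proposition \ref{formforsb}), sums over $i$ to get $G_{\mathcal{SB}_{n}}(1)=2G_{\mathcal{SB}_{n-1}}(1)+G_{\mathcal{SB}_{n-2}}(1)+\cdots+G_{\mathcal{SB}_{1}}(1)$, and then telescopes this against Fibonacci identities. You instead observe that simple braids are exactly commutation classes (traces in the Cartier--Foata sense) of square-free words, stratify by the support $S$, prove $h(k)=2^{k-1}$ via the sign-sequence bijection, and collapse the resulting convolution $\sigma_{n}=\sigma_{n-1}+\sum_{\ell\geq 1}2^{\ell-1}\sigma_{n-\ell-1}$ to the clean three-term recurrence $\sigma_{n}=3\sigma_{n-1}-\sigma_{n-2}$, which is the standard recurrence for odd-indexed Fibonacci numbers. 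What your approach buys: it is self-contained (no appeal to the canonical form of the companion paper), it explains structurally why $2$-powers appear (recovering $s_{n,n-1}=2^{n-2}$ as the full-support case), and it gives the refined count $\prod_j 2^{|I_j|-1}$ per support. What the paper's approach buys: the graded recurrence for $s_{n,i}$ itself, which is reused elsewhere (the triangle of coefficients, the polynomiality of $s_{n,i}$ in $n$, the edge count of $\Gamma_{\mathcal{SB}_n}$). One small point of rigor in your argument: in the injectivity induction for $\Phi$, deleting the front letter $x_i$ breaks the interval $\{1,\dots,k\}$ into two pieces, so the inductive statement must be formulated for words over an arbitrary subset of consecutive generators (equivalently, induct on word length with arbitrary square-free support); as you already note that distinct maximal intervals commute freely, this costs nothing, but it should be stated.
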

The paper contains some other combinatorial problems related to positive braids.

\indent In the next section the proofs of the first two theorems are given.

\indent  In the third section the generating polynomial of the square free braids is computed (Proposition \ref{gf for square free}) and the recurrence relation for its coefficients are presented (Proposition \ref{recu pro for sf}).

\indent A proof of Theorem \ref{sb and sp}, the generating polynomial for simple braids, and some properties of its coefficients (Proposition \ref{formforsb}) are contained in section 4.

\indent The fifth section contains enumerative problems related to the set of conjugacy classes of simple braids (Proposition \ref{recu pro for cc}).

\indent In the last section we analyze the subgraph of the  Cayley graph of the braid group generated by simple braids (Proposition \ref{edges} and Proposition \ref{properties}).

\indent Connections between multiple Fibonacci-type recurrence \cite{nizami:08} and Jones polynomial and  Conway-Alexander polynomial for closed braids are presented in \cite{Barbu-Nizami:09} and \cite{Barbu:Rehana 1}.

\section{\textbf{Positive braids}}

The generating function for positive braids was computed by P. Deligne \cite{french} using invariants of Coxeter groups. A direct computation for $3$-braids was done by P. Xu \cite{Xu} and  an inductive algorithm for $G_{\mathcal{MB}_{n}}(t)$ and some generalizations are contained in Z. Iqbal \cite{Zaffar}. Using any of these references, we have
\begin{cor}\rm{(\cite{french}, \cite{Xu}, \cite{Zaffar})} The generating function for positive $3$-braids is given by
$$G_{\mathcal{MB}_{3}}(t)=\frac{1}{(1-t)(1-t-t^{2})}\cdot$$
\end{cor}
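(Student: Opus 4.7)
The plan is to deduce the closed form directly from Theorem \ref{gfpositive}, which gives $b_k = F_{k+3} - 1$ for $k \geq 0$. First I would split the generating function as
\[
G_{\mathcal{MB}_{3}}(t) \;=\; \sum_{k \geq 0} F_{k+3}\, t^{k} \;-\; \sum_{k \geq 0} t^{k},
\]
and rewrite each piece in closed form. The second sum is trivially $\tfrac{1}{1-t}$. For the first, I would shift the standard Fibonacci generating function $\sum_{k \geq 0} F_{k} t^{k} = \tfrac{t}{1-t-t^{2}}$ by three indices, using
\[
\sum_{k \geq 0} F_{k+3}\, t^{k} \;=\; t^{-3}\Bigl(\tfrac{t}{1-t-t^{2}} - F_{0} - F_{1} t - F_{2} t^{2}\Bigr) \;=\; t^{-3}\Bigl(\tfrac{t}{1-t-t^{2}} - t - t^{2}\Bigr).
\]
A short simplification of the numerator, $t - (t+t^{2})(1-t-t^{2}) = 2t^{3}+t^{4}$, gives $\sum_{k \geq 0} F_{k+3}\, t^{k} = \tfrac{2+t}{1-t-t^{2}}$.

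Combining the two pieces over the common denominator $(1-t)(1-t-t^{2})$, the numerator collapses:
\[
(2+t)(1-t) - (1-t-t^{2}) \;=\; (2 - t - t^{2}) - (1 - t - t^{2}) \;=\; 1,
\]
which is precisely the stated formula.

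There is no real obstacle here once Theorem \ref{gfpositive} is in hand; the corollary is a one-step exercise in Fibonacci generating-function algebra. As a cross-check, one could equivalently multiply out and verify $(1 - 2t + t^{3})\, G_{\mathcal{MB}_{3}}(t) = 1$, which amounts to the recurrence $b_{k} = 2b_{k-1} - b_{k-3}$ for $k \geq 3$; substituting $b_{k} = F_{k+3} - 1$ reduces this to $F_{k+3} - 2F_{k+2} + F_{k} = 0$, an immediate consequence of $F_{k+3} = F_{k+2} + F_{k+1}$. Either route is pure bookkeeping, so the real work lies upstream in Theorem \ref{gfpositive} itself.
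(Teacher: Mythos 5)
Your algebra is flawless, but the logical direction is backwards relative to the paper, and this makes the argument circular. In the paper this corollary is not derived from Theorem \ref{gfpositive}; it is the \emph{input} to it. The corollary is quoted from \cite{french}, \cite{Xu}, \cite{Zaffar}, where it is established by genuinely braid-theoretic means (Deligne's inversion formula for Artin monoids, equivalently the identity $G_{\mathcal{MB}_3}(t)^{-1}=\sum_{X\subseteq\{x_1,x_2\}}(-1)^{|X|}t^{\ell(\Delta_X)}=1-2t+t^{3}=(1-t)(1-t-t^{2})$, or Xu's direct count via normal forms). The paper then proves Theorem \ref{gfpositive} by partial fractions \emph{from} this closed form. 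So if you prove the corollary by assuming $b_k=F_{k+3}-1$, you have used a statement whose only justification in the paper depends on the corollary itself; nothing in the resulting loop ever touches the braid relations, which is a telltale sign that the monoid-theoretic content has not actually been established anywhere.

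Your computation is still worth something: it is exactly the paper's partial-fraction argument run in reverse, and it correctly verifies that the two statements are equivalent as formal identities (your cross-check $(1-2t+t^{3})G_{\mathcal{MB}_{3}}(t)=1$, i.e.\ $b_k=2b_{k-1}-b_{k-3}$, is the cleanest way to see this). But to close the gap you would need an independent proof of one of the two sides, for instance a direct combinatorial derivation of the recurrence $b_k=2b_{k-1}-b_{k-3}$ from a normal form for positive $3$-braids, or the Garside/Deligne inclusion--exclusion above. As written, the real work has not been done; it has been deferred to a theorem that the paper deduces from the very statement you are trying to prove.
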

\emph{Proof of Theorem \ref{gfpositive}}  The expansion in simple parts $G_{\mathcal{MB}_{3}}(t)=\frac{2+t}{1-t-t^{2}}-\frac{1}{1-t}$
and the equality $(1-t-t^{2})^{-1}=\sum\limits_{k\geq 0}F_{k+1}t^{k}$ gives the result:

\indent\indent $b_{k}=(2F_{k+1}+F_{k})-1=(F_{k+1}+F_{k+2})-1=F_{k+3}-1.\,\,\,\,\,\,\,\,\,\,\,\,\,\,\indent\indent\indent\indent\indent\indent\indent\Box $
\emph{Proof of Theorem \ref{gfprime}}
Every positive braid $\beta$ can be written in a unique way as a product $\beta=\Delta^{k}_{n}\beta^{+}$ with $\beta^{+}\in\mathcal{MB}^{+}_{n}$ (see \cite{Garside:69}), therefore the decomposition $\mathcal{MB}_{3}=\small\coprod\small \limits _{k\geq 0}\Delta^{k}_{3}\cdot \mathcal{MB}^{+}_{3}$ implies:

$$G_{\mathcal{MB}^{+}_{3}}(t)=(1+t^{3}+t^{6}+\ldots )^{-1}\cdot G_{\mathcal{MB}_{3}}(t)=\frac{1+t+t^{2}}{1-t-t^{2}}=\sum\limits_{k\geq 1}b^{+}_{k}t^{k}.$$
Simple computations shows that $b^{+}_{0}=1$, $b^{+}_{1}=2=2F_{2}$, $b^{+}_{2}=4=2F_{3}$, and, for $k\geq 3$,  $b^{+}_{k}-b^{+}_{k-1}-b^{+}_{k-2}=0$, hence the result.$\indent\indent\indent\indent\indent\indent\indent\indent\indent\indent\indent\indent\indent\indent\indent\,\,\,\,\ \Box$

For a universal upper bound of the  growing type of $\mathcal{MB}_{n}$, see \cite{zaf:bar}.

\section{\textbf{Square free braids}}

To represent an element of $Div(\Delta_{n})$, i.e. a positive square free braid, we choose the canonical form given by the smallest elements in the length-lexicographic order (see \cite{Barbu}, \cite{Barbu:usman}):
$$\beta_{K,J}=\beta_{k_{1},j_{1}}\beta_{k_{2},j_{2}}\ldots \beta_{k_{s},j_{s}}$$
where $\beta_{k,j}=x_{k}x_{k-1}\ldots x_{j+1}x_{j}$,  $0\leq s\leq n-1,\,1\leq k _{1}<k_{2}\ldots <k_{s}\leq n-1,$
and $j_{h}\leq k_{h}$ for $h=1,\ldots ,s $ (the case $s=0$
corresponds to the unit $\beta=1$). For simplicity, we will write $Div_{n}$ for $Div(\Delta_{n})$. Let us denote by $d_{n,i}$  the number of divisors of $\Delta_{n}$ of length $i$ and by $G_{Div_{n}}(t)$ the generating polynomial  of the square
free $n$-braids.
\begin{proposition}\label{gf for square free}$G_{Div_{n}}(t)=\sum\limits_{i=0}^{n(n-1)/2} d_{n,i}t^{i}$

$\indent\indent\indent\indent\indent\indent\indent\indent\indent\indent\indent\,=(1+t)(1+t+t^{2})\ldots (1+t+t^{2}+\ldots +t^{n-1}).$
\end{proposition}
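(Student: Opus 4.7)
The plan is to read off the generating polynomial directly from the canonical form $\beta_{K,J}=\beta_{k_{1},j_{1}}\beta_{k_{2},j_{2}}\ldots \beta_{k_{s},j_{s}}$ recalled immediately before the statement. Since every element of $Div_{n}$ has a unique such expression, counting divisors of $\Delta_{n}$ reduces to counting admissible pairs $(K,J)$ weighted by length, and the admissibility conditions turn out to be independent across the index $k \in \{1,2,\ldots,n-1\}$.

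First I would unpack the data: a canonical word is specified by choosing a subset $K=\{k_{1}<\cdots<k_{s}\}\subseteq\{1,\ldots,n-1\}$ together with, for each $k_{h}\in K$, a value $j_{h}\in\{1,2,\ldots,k_{h}\}$. Equivalently, for every $k\in\{1,\ldots,n-1\}$ one makes one of $k+1$ independent choices: either omit $k$ from $K$, or include it and pick some $j\in\{1,\ldots,k\}$.

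Next I would check that length is additive across these independent choices. The factor $\beta_{k_{h},j_{h}}=x_{k_{h}}x_{k_{h}-1}\ldots x_{j_{h}}$ contributes exactly $k_{h}-j_{h}+1$ letters, so $\ell(\beta_{K,J})=\sum_{h=1}^{s}(k_{h}-j_{h}+1)$. Therefore the generating polynomial factorises into local contributions, one per $k$:
$$G_{Div_{n}}(t)=\prod_{k=1}^{n-1}\Bigl(1+\sum_{j=1}^{k}t^{k-j+1}\Bigr)=\prod_{k=1}^{n-1}(1+t+t^{2}+\cdots+t^{k}).$$
The summand $1$ records the choice of omitting $k$, while the exponents $t^{k},t^{k-1},\ldots,t^{1}$ correspond to $j=1,2,\ldots,k$ respectively, giving the factor $1+t+t^{2}+\cdots+t^{k}$ as claimed.

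The only real obstacle is the bijection underlying the first step, namely the uniqueness of the canonical form and the fact that \emph{every} divisor of $\Delta_{n}$ admits such a representation for some $(K,J)$. This is precisely the content of the length-lexicographic normal form cited from \cite{Barbu,Barbu:usman}, so we may invoke it directly; the remainder of the argument is then the elementary counting above.
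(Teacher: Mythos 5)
Your argument is correct and is essentially the paper's proof: both rest on the unique canonical factorisation $\beta_{K,J}=\beta_{k_1,j_1}\cdots\beta_{k_s,j_s}$ and the observation that the choice at each index $k$ (omit, or include with some $j\in\{1,\dots,k\}$) contributes an independent factor $1+t+\cdots+t^{k}$ to the generating polynomial. The paper merely packages the same decomposition as an induction on $n$, via the bijection $Div_{n-1}\times\{1,\beta_{n-1,1},\dots,\beta_{n-1,n-1}\}\to Div_{n}$ that peels off the last syllable, whereas you write the whole product at once.
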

\begin{proof}We start the  induction  with $n=2$: $Div_{2}=\{1,x_{1}\}$ and $G_{Div_{2}}(t)=1+t$. The canonical form of square free braids shows that the map
$$f:Div_{n-1}\times \{1,\beta_{n-1,1},\beta_{n-1,2},\ldots,\beta_{n-1,n-1}\}\longrightarrow Div_{n},$$
defined by $f(\omega,1)=w$, $f(\omega, \beta_{n,k})=w\cdot \beta_{n,k}$, is a  bijection. The generating polynomial of the set $\{1,\beta_{n-1,k}\}_{k=1,\ldots,n-1}$ is $1+t+\ldots+t^{n-1}$, so
$G_{Div_{n}}(t)=G_{Div_{n-1}}(t)\cdot (1+t+\ldots+t^{n-1})$.
\end{proof}
\begin{cor}\label{recu pro for sf}The sequence $(d_{n,i})_{i=0,\ldots,\frac{n(n-1)}{2}}$ is symmetric and unimodular and satisfies the following recurrence relation:

\indent a) $d_{1,0}=1$, $d_{1,i}=0$ if $i\neq 1$;

\indent b) $d_{n+1,i}=d_{n,i}+d_{n,i-1}+\ldots+d_{n,i-n}$.
\end{cor}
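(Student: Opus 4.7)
The approach is to deduce all three assertions from the factorization
\[G_{Div_{n}}(t)=(1+t)(1+t+t^{2})\cdots(1+t+t^{2}+\ldots+t^{n-1})\]
established in Proposition \ref{gf for square free}; the three properties are essentially three different features of this single product.

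The recurrence (b) is then immediate: the factorization yields
\[G_{Div_{n+1}}(t)=G_{Div_{n}}(t)\cdot(1+t+t^{2}+\ldots+t^{n}),\]
and comparing coefficients of $t^{i}$ on the two sides gives directly $d_{n+1,i}=d_{n,i}+d_{n,i-1}+\ldots+d_{n,i-n}$. The initial condition (a) just records the trivial fact that $Div_{1}=\{1\}$, so no work is needed there.

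Symmetry follows by an equally formal argument. Each factor $1+t+\ldots+t^{k}$ is palindromic, i.e.\ satisfies $t^{k}p(1/t)=p(t)$, and this palindromic property is preserved under multiplication (with degrees adding). Hence $G_{Div_{n}}(t)$, which has degree $N=n(n-1)/2$, satisfies $t^{N}G_{Div_{n}}(1/t)=G_{Div_{n}}(t)$, which is precisely the statement $d_{n,i}=d_{n,N-i}$.

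Unimodality is the only nontrivial ingredient and is the main obstacle. The plan here is to invoke the classical fact that the product of two symmetric unimodal polynomials with nonnegative coefficients is again symmetric and unimodal (as in Stanley's survey on log-concave and unimodal sequences in algebra, combinatorics, and geometry). Since each factor $1+t+\ldots+t^{k}$ is trivially symmetric and unimodal with nonnegative coefficients, induction on the number of factors in the product then yields the unimodality of $G_{Div_{n}}(t)$, and hence of the sequence $(d_{n,i})$. An alternative route, avoiding any appeal to an external lemma, would be to prove unimodality directly from the recurrence in (b) by verifying that the convolution $d_{n+1,i}=\sum_{j=0}^{n}d_{n,i-j}$ preserves the joint property of being symmetric and unimodal; this is the only place where genuine combinatorial content enters.
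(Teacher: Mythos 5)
Your proposal is correct and follows exactly the route the paper intends: the Corollary is stated without proof precisely because all three claims are meant to be read off from the factorization in Proposition \ref{gf for square free}, with (b) coming from coefficient comparison in $G_{Div_{n+1}}(t)=G_{Div_{n}}(t)(1+t+\ldots+t^{n})$ and symmetry from palindromicity of the factors. Your identification of unimodality as the one step needing a genuine argument (the classical fact that a product of symmetric unimodal polynomials with nonnegative coefficients is symmetric and unimodal, or equivalently the direct convolution check) is accurate and fills the only gap the paper leaves implicit.
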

\section{\textbf{Simple braids}}
The canonical form of a simple braids in $\mathcal{SB}_{n}$ is
$$\beta_{K,J}=\beta_{k_{1},j_{1}}\beta_{k_{2},j_{2}}\ldots
\beta_{k_{s},j_{s}}$$
where $1\leq k_{1}<k_{2}<\ldots k_{s}\leq
n-1$,  $j_{i}\leq k_{i}$ for all $i=1,2,\ldots,s$,  and also  $j_{i+1}>k_{i}$ for all
$i=1,2,\ldots,s-1$ (see \cite{Barbu-Rehana:02}).  Let us denote by $\mathcal{SB}^{i}_{n}$ the subset of simple braids of length $i$ in $\mathcal{SB}_{n}$ and $s_{n,i}$ its cardinality.
The generating polynomial  of simple $n$-braids is denoted by
$G_{\mathcal{SB}_{n}}(t)=\sum\limits_{i=0}^{n-1} s_{n,i}t^{i}$. We are interested to count the number of simple braids $G_{\mathcal{SB}_{n}}(1)$.

\begin{proposition}\label{formforsb} The sequence $(s_{n,i})$ is given by the recurrence:

\indent a) $s_{1,1}=0$ and $s_{1,i}=0$ for $i\neq 1$;

\indent b) $s_{n,i}=s_{n-1,i}+s_{n-1,i-1}+s_{n-2,i-2}+\ldots+s_{n-i,0}.$
\end{proposition}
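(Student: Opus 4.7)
The plan is to prove the recurrence by decomposing each simple $n$-braid according to its last descending block in the canonical form recalled just above the proposition.

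First I would fix $i \geq 1$ and split $\mathcal{SB}_n^i$ into two disjoint pieces, according to whether the letter $x_{n-1}$ appears in the canonical form $\beta_{K,J} = \beta_{k_1,j_1}\cdots\beta_{k_s,j_s}$. Since $k_1 < k_2 < \cdots < k_s \leq n-1$, the letter $x_{n-1}$ appears if and only if $k_s = n-1$. Those simple braids with $k_s \leq n-2$ use only the alphabet $\{x_1,\ldots,x_{n-2}\}$, and the conditions on $(K,J)$ are unchanged, so they are exactly the elements of $\mathcal{SB}_{n-1}^i$. This accounts for the first term $s_{n-1,i}$.

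Next I would treat the braids with $k_s = n-1$. The last block has the form $\beta_{n-1,j_s} = x_{n-1}x_{n-2}\cdots x_{j_s}$ of length $\ell := n - j_s$, where $1 \leq \ell \leq i$. The gap condition $j_s > k_{s-1}$ (vacuous when $s=1$) becomes $k_{s-1} \leq n-\ell-1$, so the prefix $\beta_{k_1,j_1}\cdots\beta_{k_{s-1},j_{s-1}}$ uses only the alphabet $\{x_1,\ldots,x_{n-\ell-1}\}$ and satisfies all constraints of the canonical form for $\mathcal{SB}_{n-\ell}$. Conversely, any element of $\mathcal{SB}_{n-\ell}^{i-\ell}$ can be completed to an element of $\mathcal{SB}_n^i$ with last block $\beta_{n-1,n-\ell}$, yielding a bijection. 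This gives the term $s_{n-\ell,i-\ell}$. Summing over $\ell=1,\ldots,i$ and adding the first case produces exactly
\[
s_{n,i} = s_{n-1,i} + s_{n-1,i-1} + s_{n-2,i-2} + \cdots + s_{n-i,0}.
\]

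The routine part is verifying the translation of the inequalities $k_{s-1} < j_s$ and $j_{s-1} \leq k_{s-1}$ into the data defining a canonical form on a shorter alphabet. The only place requiring care — and thus the main obstacle — is checking that the correspondence in Case~B is genuinely a bijection: I must make sure I have not lost any elements by insisting that the last block be of the specific form $\beta_{n-1,n-\ell}$, and that the constraint $j_s > k_{s-1}$ precisely matches the alphabet restriction to $\{x_1,\ldots,x_{n-\ell-1}\}$. Once this bookkeeping is pinned down, the recurrence follows immediately and the initial conditions in (a) are a direct inspection of the empty braid in $\mathcal{SB}_1$.
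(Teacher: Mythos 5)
Your decomposition by the last descending block $\beta_{n-1,j_s}$ is exactly the disjoint-union argument the paper uses, and your bookkeeping of the gap condition $j_s>k_{s-1}$ is correct (indeed, your version makes explicit that the piece with last block of length $\ell$ is $\mathcal{SB}^{i-\ell}_{n-\ell}$, which the paper's displayed union garbles as $\mathcal{SB}^{i-1}_{n-1}\times\{x_{n-1}x_{n-2}\}$ in its third term). The proof is correct and essentially identical to the paper's.
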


\begin{proof}The set  $\mathcal{SB}^{i}_{n}$ can be decomposed as disjoint union  as follows:
  $$\mathcal{SB}^{i}_{n} = \mathcal{SB}^{i}_{n-1}\amalg( \mathcal{SB}^{i-1}_{n-1}\times\{x_{n-1}\})\amalg( \mathcal{SB}^{i-1}_{n-1}\times\{x_{n-1}x_{n-2}\})\amalg\ldots \amalg\{x_{n-1}\ldots x_{1}\}.$$
\end{proof}
\begin{cor}The sequence $(s_{n,i})$ satisfies also the recurrence : $$s_{n,i}=2s_{n-1,i-1}+s_{n-1,i}-s_{n-2,i-1}.$$
\end{cor}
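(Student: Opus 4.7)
The plan is to derive the three-term recurrence by eliminating the long tail in Proposition \ref{formforsb}. Writing the recurrence of part (b) once as stated and once with $(n,i)$ replaced by $(n-1,i-1)$, the two expansions share a common tail of the form $s_{n-2,i-2}+s_{n-3,i-3}+\ldots+s_{n-i,0}$, which suggests subtracting one from the other to make this tail cancel.

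More concretely, I would first isolate the tail. Proposition \ref{formforsb} gives
\[
s_{n,i}-s_{n-1,i}=s_{n-1,i-1}+s_{n-2,i-2}+\ldots+s_{n-i,0},
\]
and applying the same proposition to the pair $(n-1,i-1)$ yields
\[
s_{n-1,i-1}-s_{n-2,i-1}=s_{n-2,i-2}+s_{n-3,i-3}+\ldots+s_{n-i,0}.
\]
The right-hand sides differ only by the single term $s_{n-1,i-1}$, so subtracting the second identity from the first eliminates the long tail and leaves
\[
(s_{n,i}-s_{n-1,i})-(s_{n-1,i-1}-s_{n-2,i-1})=s_{n-1,i-1},
\]
which rearranges to the claimed formula $s_{n,i}=2s_{n-1,i-1}+s_{n-1,i}-s_{n-2,i-1}$.

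I do not expect any serious obstacle: the computation is purely a telescoping of the sum in Proposition \ref{formforsb}. The only point that needs a brief check is the boundary: the two expansions used above have the same number of terms (ending at $s_{n-i,0}$) only when $i\le n-1$, and the shifted one requires $i-1\le n-2$, which is the same condition. Edge cases such as $i=0$, $i=1$, or the smallest admissible $n$ can be verified directly against the initial conditions in part (a) of Proposition \ref{formforsb}, so the corollary holds throughout the range where $s_{n,i}$ is defined.
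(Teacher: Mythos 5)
Your derivation is correct and is precisely the intended argument: the paper states this as an immediate corollary of Proposition \ref{formforsb} without writing out a proof, and the telescoping subtraction you perform (applying the long recurrence at $(n,i)$ and at $(n-1,i-1)$, noting both tails terminate at $s_{n-i,0}$, and cancelling) is the standard way to obtain it. Your observation that the boundary cases $i=0,1$ and small $n$ should be checked directly against the initial conditions is the only extra care needed, and you handle it.
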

\begin{example} Starting with $s_{n,0}=1$, $s_{n,1}=n-1$, and using the recurrence of Proposition \ref{formforsb}   we get $s_{n,2}=(n-1)(n+2)/2!$, $s_{n,3}=(n-3)(n+4)(n-1)/3!$ and $s_{n,4}=(n-4)(n+1)(n^{2}+5n-18)/4!$. Using the same recurrence we find that the last non zero coefficient is
$s_{n,n-1}=2^{n-2}$ (if $n\geq 2$). First values of $s_{n,i}$ (on the $n$-th line) are given in the triangle:
\end{example}

\begin{tabbing}
  0 \qquad\qquad\qquad\qquad\qquad\qquad\= 1 \= 2 \= 3 \= 4 \= 5 \= 6 \= 7 \= 8 \=9 \kill
  \> \>  \>  \>  \> 1 \>  \>  \>  \>  \\
  \> \>  \>  \> 1 \>  \> 1 \>  \>  \>  \\
  \> \>  \> 1 \>  \> 2 \>  \> 2 \>  \>  \\
  \> \> 1 \>  \> 3 \>  \> 5 \>  \> 4 \>  \\
 \> 1 \>  \> 4 \>  \> 9 \>  \> 12 \>  \> 8
\end{tabbing}


\begin{proposition}$s_{n,i}$ is a polynomial in $n$ of degree $i$ and leading coefficient is $1/i!$.
\end{proposition}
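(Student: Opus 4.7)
\emph{Proof plan.} My plan is to induct on $i$, working from the telescoped form of the recurrence stated in the corollary to Proposition~\ref{formforsb}:
$$s_{n,i} - s_{n-1,i} \;=\; 2\,s_{n-1,i-1} - s_{n-2,i-1}.$$

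The base case $i=0$ is trivial: $s_{n,0} \equiv 1$ is a polynomial of degree $0$ with leading coefficient $1 = 1/0!$. For the inductive step, I will assume that for each $j < i$ there is a polynomial $p_j(n)$ of degree $j$ with leading coefficient $1/j!$ that agrees with $s_{n,j}$ for all sufficiently large $n$. Substituting into the recurrence above, the right-hand side becomes
$$q(n) \;:=\; 2\,p_{i-1}(n-1) - p_{i-1}(n-2),$$
which is a polynomial in $n$ of degree $i-1$ whose leading coefficient is $\frac{2}{(i-1)!} - \frac{1}{(i-1)!} = \frac{1}{(i-1)!}$. I then invoke the standard antidifference fact: any polynomial $q$ of degree $d$ admits an antidifference (unique up to an additive constant) of degree $d+1$, whose leading coefficient equals that of $q$ divided by $d+1$. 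Applied here, this yields $p_i(n)$ of degree $i$ with leading coefficient $\frac{1/(i-1)!}{i} = 1/i!$, and the additive constant is fixed by matching $p_i$ to $s_{n,i}$ at any single known value, for example $n=i+1$ (where $s_{i+1,i}=2^{i-1}$ by the last-nonzero-coefficient formula recorded in the example).

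The only bookkeeping is to track the threshold $N_i$ beyond which $p_i(n) = s_{n,i}$ holds: the recurrence propagates the polynomial identity only once $s_{n-1,i-1}$ and $s_{n-2,i-1}$ are themselves given by their polynomial expressions, so $N_i$ grows by a bounded amount at each inductive step (the table suggests $N_i = i$ suffices, but any uniform bound is enough for the qualitative statement). I do not foresee any real obstacle; the essential content of the proposition is the elementary observation that antidifferencing raises polynomial degree by one and divides the leading coefficient by the new degree, iterated $i$ times starting from the constant $1$.
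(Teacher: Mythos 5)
Your proof is correct and takes essentially the same route as the paper's: induct on $i$, show that the first difference $s_{n,i}-s_{n-1,i}$ is a polynomial in $n$ of degree $i-1$ with leading coefficient $1/(i-1)!$, and then antidifference to raise the degree by one and divide the leading coefficient by $i$. The only (immaterial) variation is that you feed in the three-term recurrence of the Corollary where the paper uses the full sum $s_{n-1,i-1}+s_{n-2,i-2}+\cdots+s_{n-i,0}$ from Proposition \ref{formforsb}; if anything, you are more careful than the paper about the antidifference step and about the range of $n$ on which the polynomial identity actually holds.
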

\begin{proof}The induction by $i$ starts with  $s_{n,0}=1$ and $s_{n,1}=n-1$. Using Proposition \ref{formforsb}, we have
$$s_{n,i}-s_{n-1,i}=s_{n-1,i-1}+s_{n-2,i-2}+\ldots+s_{n-i,0}$$
 where the sum is a polynomial in $n$ of degree $i-1$ and leading coefficient  $1/(i-1)!$. This implies that $s_{n,i}$ is a polynomial in $n$ of degree $i$ and leading coefficient is $1/i!$.

\end{proof}

\emph{Proof of Theorem \ref{sb and sp}}
By definition $G_{\mathcal{SB}_{n}}(1)=s_{n,0}+s_{n,1}+s_{n,2}+s_{n,n-2}+\ldots+s_{n,n-1}$. Using the recurrence given in Proposition \ref{formforsb}, we expand $G_{\mathcal{SB}_{n}}(1)=\sum\limits_{i=0}^{n-1}s_{n,i}$  and get $$G_{\mathcal{SB}_{n}}(1)=2G_{\mathcal{SB}_{n-1}}(1)+G_{\mathcal{SB}_{n-2}}(1)+G_{\mathcal{SB}_{n-3}}(1)+
\ldots+G_{\mathcal{SB}_{2}}(1)+G_{\mathcal{SB}_{1}}(1).$$
Starting an induction with  $G_{\mathcal{SB}_{1}}(1)=1=F_{1}=F_{2}$, $G_{\mathcal{SB}_{2}}(1)=2=F_{3}$, $G_{\mathcal{SB}_{3}}(1)=5=F_{5}$, we obtain
\begin{eqnarray*}
  G_{\mathcal{SB}_{n}}(1) &=& 2F_{2n-3}+F_{2n-5}+\ldots+F_{5}+F_{3}+F_{2} \\
  &=&2F_{2n-3}+F_{2n-5}+\ldots+F_{5}+F_{4}\\
  &=&\cdots\\
  &=&2F_{2n-3}+F_{2n-5}+F_{2n-6}=2F_{2n-3}+F_{2n-4}\\
  &=&F_{2n-2}+F_{2n-3}= F_{2n-1}.\indent\indent\indent\indent\indent\indent\indent\indent\indent\indent\indent\indent\indent\indent\indent\Box
 \end{eqnarray*}

\section{\textbf{Conjugacy classes of simple braids}}
A  simple braid $\beta\in \mathcal{SB}_{n}$ is conjugate to the braid
 $$\beta_{A}=(x_{1}x_{2}\ldots x_{s_{1}-1})(x_{s_{1}+1}\ldots x_{s_{2}-1})\ldots (x_{s_{r-1}+1}\ldots x_{s_{r}-1})$$
 (i.e. there is a positive braid $\alpha\in\mathcal{MB}_{n}$ such that $\beta\alpha=\alpha\beta_{A}$);  here $A=(a_{1},a_{2},\ldots,a_{r})$
is a sequence of integers satisfying
$a_{1}\geq a_{2}\geq\ldots \geq a_{r}\geq 2$ and $s_{i}=a_{1}+a_{2}+\ldots +a_{i}$. Conversely, if $\beta_{A}$ and $\beta_{A'}$ are conjugate, then the sequences $A$ and $A'$ coincide (see \cite{Barbu-Rehana:02}).

\indent The generating polynomial for distinct conjugate $n$-simple braid  is denoted by
 $Cf_{n}(t)=\sum^{n-1}\limits_{i=0}c_{n,i}t^{i}$, where $c_{n,i}$ is the  number of conjugacy classes of positive   simple braids of length $i$. A partition of a positive integer $m$ is a representation of $m$ in a form $m=m_{1}+m_{2}+\ldots+m_{k}$ where  the integers $m_{1},m_{2},\ldots,m_{k}$ satisfy the inequalities $m_{1}\geq m_{2}\geq\ldots\geq m_{k}\geq1$. The number of partitions of $m$ into $k$ parts is denoted by $P(m,k)$ (see \cite{Tomescue}).

 \begin{proposition}\label{recu pro for cc} The number of conjugacy classes of simple braids of length $i$ is given by
 $${\huge c} _{n,i}=P(i+min(i,n-i),min(i,n-i)).$$
  \end{proposition}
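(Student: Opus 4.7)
The plan is to parametrize conjugacy classes by the sequences $A = (a_1, \ldots, a_r)$ introduced before the statement (with $a_1 \geq a_2 \geq \cdots \geq a_r \geq 2$), translate the length condition and the ambient condition ``$\beta_A \in \mathcal{SB}_n$'' into arithmetic constraints on $A$, and finally recognize the count as a single value of the partition function via the standard identity relating partitions with at most $k$ parts to partitions with exactly $k$ parts.

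First, I would read off from the block structure $(x_{s_{j-1}+1}\cdots x_{s_j-1})$ that the $j$-th block has length $a_j - 1$, so $|\beta_A| = \sum_{j=1}^r (a_j-1) = m - r$ where $m := a_1 + \cdots + a_r = s_r$. Since the largest generator appearing is $x_{s_r - 1}$, the condition $\beta_A \in \mathcal{SB}_n$ is exactly $m \leq n$. Hence conjugacy classes in $\mathcal{SB}_n$ of length $i$ are in bijection with sequences $A$ with $a_j \geq 2$, $a_1 \geq \cdots \geq a_r$, $m - r = i$ and $m \leq n$.

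Next, I would apply the substitution $b_j := a_j - 1 \geq 1$, which converts $A$ into an ordinary partition $b_1 \geq \cdots \geq b_r \geq 1$ of the integer $m - r = i$ into exactly $r$ parts. The constraint $m \leq n$ becomes $r = m - i \leq n - i$, and $b_r \geq 1$ forces $r \leq i$. Thus $r$ ranges over $1 \leq r \leq \min(i, n-i)$, and for $i \geq 1$
\[
c_{n,i} = \sum_{r=1}^{\min(i,n-i)} P(i,r).
\]

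Finally, I would invoke the classical bijection that adds $1$ to each part: partitions of $i$ with at most $k$ parts correspond to partitions of $i + k$ with exactly $k$ parts, giving $\sum_{r=1}^{k} P(i,r) = P(i+k, k)$ for $i \geq 1$. Specializing $k = \min(i, n-i)$ yields the stated formula, and the case $i = 0$ is handled by the empty braid with the convention $P(0,0) = 1$. The only conceptual step is pinning down the upper bound $r \leq n - i$ coming from the ambient dimension; once this is in place, the remainder is the standard partition identity, so I do not expect a real obstacle.
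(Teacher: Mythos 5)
Your proposal is correct and follows essentially the same route as the paper: parametrize conjugacy classes by the sequences $A$, shift each $a_j$ by $1$ to get a partition of $i$ into at most $\min(i,n-i)$ parts, and apply the identity $\sum_{r=1}^{k}P(i,r)=P(i+k,k)$. Your write-up is in fact slightly cleaner, as the paper's version contains a couple of typographical slips (e.g.\ ``$i+r\leq r$'' where $i+r\leq n$ is meant) that you avoid.
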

 \begin{proof}Consider $\beta_{A}=(x_{1}x_{2}\ldots x_{s_{1}-1})(x_{s_{1}+1}\ldots x_{s_{2}-1})\ldots (x_{s_{r-1}+1}\ldots x_{s_{r}-1})$, the canonical representative of a conjugacy class  in $\mathcal{SB}_{n}$, of length $i=s_{r}-r$.We associate to the sequence  $A=(a_{1}\geq a_{2}\geq \ldots a_{r})$ (here $a_{r}\geq 2$) the partition of $i$,  $i=(a_{1}-1)+(a_{2}-1)+\ldots(a_{r}-1)$. The condition $s_{r}=a_{1}+a_{2}+\ldots+a_{r}\leq n$
 implies $i+r \leq r$, therefore the number of conjugacy classes of simple braids of length $i$ is given by the number of partitions of $i$ into at most $n-i$ parts: $$c_{n,i}=P(i,1)+P(i,2)+\ldots+P(i, min(i,n-i)).$$ Using the relation  $P(n+k,k)=\sum\limits_{i=1}^{k}P(n,k)$ $(k \leq n)$ (see \cite{Tomescue}), we obtain the result.
\end{proof}

 \section{\textbf{Simple Graph}}

We consider the subgraph of the Cayley graph of the group $\mathcal{B}_{n}$ with vertices the  simple braids.
\begin{definition}The \emph{simple graph} $\Gamma_{\mathcal{S}\mathcal{B}_{n}}$ is the graph with vertices $\mathcal{SB}_{n}$ and edges between the simple braids $\beta\leftrightarrow \beta x_{i}$ $(i=1,\ldots, n-1)$. The number of vertices of $\Gamma_{\mathcal{S}\mathcal{B}_{n}}$ is $F_{2n-1}$.
\end{definition}
\begin{proposition}\label{edges}The  number of edges of the graph $\Gamma_{\mathcal{S}\mathcal{B}_{n}}$   is:
$$e(\Gamma_{\mathcal{SB}_{n}})=(n-1)s_{n,0}+(n-2)s_{n,1}+\ldots+s_{n,n-2}.$$
\end{proposition}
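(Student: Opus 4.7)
The plan is to read each edge $\beta \leftrightarrow \beta x_i$ as connecting a simple braid of length $k$ with a simple braid of length $k+1$. Since the positive braid monoid is length-graded (both braid relations preserve word length), $\mathrm{length}(\beta x_i) = \mathrm{length}(\beta) + 1$, so every edge has a unique lower endpoint. Hence, letting $u(\beta)$ denote the number of generators $x_i$ such that $\beta x_i$ is again simple, the total edge count is $\sum_{\beta \in \mathcal{SB}_n} u(\beta)$. The core of the proof is therefore to show that $u(\beta) = n - 1 - k$ whenever $\beta \in \mathcal{SB}^k_n$; summing $(n-1-k)s_{n,k}$ from $k=0$ to $k=n-2$ then gives the announced formula (the stratum $k = n-1$ contributes nothing since all $n-1$ generators are already used).

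The key tool would be a second invariant, besides length: for $\beta \in \mathcal{MB}_n$, let $\mathrm{Let}(\beta) \subseteq \{x_1, \ldots, x_{n-1}\}$ be the set of generators appearing in any representing word of $\beta$. I would first verify that $\mathrm{Let}$ is well-defined on $\mathcal{MB}_n$: the commutation $x_i x_j \leftrightarrow x_j x_i$ trivially preserves the letter set, and $x_i x_{i+1} x_i \leftrightarrow x_{i+1} x_i x_{i+1}$ has both sides using exactly $\{x_i, x_{i+1}\}$. Consequently $\mathrm{Let}(\alpha\beta) = \mathrm{Let}(\alpha) \cup \mathrm{Let}(\beta)$. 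Next I would prove the characterization
\begin{equation*}
\beta \in \mathcal{SB}_n \ \Longleftrightarrow\ \mathrm{length}(\beta) = |\mathrm{Let}(\beta)|,
\end{equation*}
which is immediate from the definition of simple braid: in any word representative of $\beta$, the number of letter occurrences equals $\mathrm{length}(\beta)$ and is at least $|\mathrm{Let}(\beta)|$, with equality forcing every generator present to appear exactly once.

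Combining these, for a simple $\beta$ of length $k$ we have $|\mathrm{Let}(\beta)| = k$, and $\beta x_i$ is simple iff
\begin{equation*}
k + 1 = \mathrm{length}(\beta x_i) = |\mathrm{Let}(\beta x_i)| = |\mathrm{Let}(\beta) \cup \{x_i\}|,
\end{equation*}
which happens precisely when $x_i \notin \mathrm{Let}(\beta)$. There are exactly $(n-1) - k$ such generators, so $u(\beta) = n - 1 - k$ and the edge count formula follows. The only subtle point, and hence the main thing to get right, is the well-definedness of $\mathrm{Let}$ on $\mathcal{MB}_n$; once the length$\,=\,|\mathrm{Let}|$ criterion is established, no further calculation is needed.
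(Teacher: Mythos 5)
Your proposal is correct and follows essentially the same route as the paper: the paper also counts, for each simple braid $\beta$ of length $i$, the $n-1-i$ generators not occurring in $\beta$ as the edges going up to length $i+1$, and sums $(n-1-i)s_{n,i}$. Your added care in checking that the letter set is invariant under the braid relations (so that ``simple iff length equals number of distinct letters'' is well posed) is a detail the paper leaves implicit, but it does not change the argument.
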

\begin{proof}The number of edges between a vertex $\beta$ of length $i$ and vertices of length $i+1$, $\beta x_{j}$, is the number of letters $x_{j}$ which are not in the simple braid $\beta$, and this number is $n-1-i$; this gives $(n-1-i)s_{n,i}$ edges between simple braids of length $i$ and $i+1$.
\end{proof}
\begin{proposition}\label{properties}a)  The graph $\Gamma_{\mathcal{SB}_{n}}$  is connected and  $n$-partite.

\indent b) $\Gamma_{\mathcal{SB}_{n}}$ is planar if and only if $n\leq 6.$
\end{proposition}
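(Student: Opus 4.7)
\medskip\noindent\textbf{Proof plan.} For part (a), connectedness will be shown by a descending-length argument based on the canonical form: given $\beta = \beta_{k_1,j_1}\cdots\beta_{k_s,j_s}$ in $\mathcal{SB}_n$, the last letter of this word is $x_{j_s}$, and removing it yields another simple braid in canonical form---either one shortens the last block from $\beta_{k_s,j_s}$ to $\beta_{k_s,j_s+1}$ (when $j_s < k_s$), or deletes the last block entirely (when $j_s = k_s$). The defining inequalities $k_i < k_{i+1}$ and $j_{i+1} > k_i$ are preserved in either case, so iterating produces a path in $\Gamma_{\mathcal{SB}_n}$ from $\beta$ down to the identity. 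For the $n$-partite property, partition $\mathcal{SB}_n$ by length into the level sets $\mathcal{SB}^0_n, \mathcal{SB}^1_n, \ldots, \mathcal{SB}^{n-1}_n$; since appending a letter increases length by exactly one, edges connect only consecutive level sets, giving a proper $n$-coloring.

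For part (b), the key structural observation is that $\mathcal{SB}_n \hookrightarrow \mathcal{SB}_{n+1}$ realizes $\Gamma_{\mathcal{SB}_n}$ as an induced subgraph of $\Gamma_{\mathcal{SB}_{n+1}}$: each edge $\beta \leftrightarrow \beta x_i$ with $i \leq n-1$ between simple $n$-braids survives in the larger graph. Hence non-planarity is monotone in $n$, so it suffices to establish planarity for $n=6$ and non-planarity for $n=7$. For $n \leq 6$ I would exhibit explicit planar embeddings level-by-level, exploiting the layered bipartite structure from (a); the extremal case $n=6$ has only $89$ vertices and $145$ edges, comfortably below the bipartite-planar bound $2v - 4 = 174$, and either an induction on $n$ or a direct case-by-case drawing should succeed.

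The main obstacle is proving non-planarity of $\Gamma_{\mathcal{SB}_7}$. Here $v = 233$ and $e = 444 < 462 = 2v - 4$, so the bipartite Euler bound is not violated and edge counting alone does not suffice; one must exhibit a Kuratowski subgraph. My plan is to construct an explicit $K_{3,3}$-subdivision by picking three simple braids in some level $L_i$ and three in a later level $L_j$---natural candidates lie near the central generators $x_3, x_4$, where the level-to-level branching is densest---and threading nine internally vertex-disjoint paths among them through the intervening levels. Verifying disjointness of these nine paths is the combinatorial crux, and it will likely require a careful case analysis through the layered graph rather than any purely structural shortcut.
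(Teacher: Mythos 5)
Your part (a) is correct and is essentially the paper's argument: the paper connects $\beta=x_{i_1}x_{i_2}\ldots x_{i_k}$ to the identity through its prefixes (each prefix of a simple braid is again simple, since no letter can acquire a second occurrence), which is your descent read backwards, and the colouring by length is identical. The reduction in part (b) is also the paper's: $\Gamma_{\mathcal{SB}_n}$ sits inside $\Gamma_{\mathcal{SB}_{n+1}}$, so it suffices to draw $\Gamma_{\mathcal{SB}_6}$ in the plane and to find a Kuratowski subgraph in $\Gamma_{\mathcal{SB}_7}$. Your counts ($89$ vertices and $145$ edges for $n=6$; $233$ vertices and $444<462=2v-4$ edges for $n=7$) are correct, and the observation that the bipartite Euler bound is silent for $n=7$ is a worthwhile remark the paper does not make explicit.

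The gap is that for part (b) you have a plan where the paper has a proof: the entire mathematical content of (b) consists of two explicit constructions, and you produce neither. The paper settles $n\le 6$ by an explicit planar drawing of $\Gamma_{\mathcal{SB}_6}$ (``a direct case-by-case drawing should succeed'' is not evidence that it does), and settles $n=7$ by exhibiting a concrete $K_{3,3}$ subdivision with all vertices named. Moreover, that subdivision is not of the shape you propose (three branch vertices in one level and three in a later one): its two classes are $\{x_1,x_3,x_6\}$ --- three pairwise-commuting generators --- and $\{e,\;x_2x_6,\;x_1x_3x_6\}$, lying in levels $1$ and $0,2,3$ respectively; three of the nine connections are single edges ($e$ to each $x_i$), and the remaining paths are threaded through products of pairwise-commuting letters such as $x_1x_3$, $x_3x_6$, $x_1x_6$, $x_2x_4x_6$, so that internal disjointness is checked by inspection of the labels rather than by a long case analysis. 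Until you actually exhibit the planar embedding for $n=6$ and the nine internally disjoint paths for $n=7$, the proof of (b) is incomplete at exactly its crux.
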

\begin{proof}a) Any vertex $\beta=x_{i_{1}}x_{i_{2}}\ldots x_{i_{k}}$ is connected to the empty word $1$ by the path $1$--- $x_{i_{1}}$---$ x_{i_{1}}x_{i_{2}}$--- $ \ldots$ ---$\beta$. The parts are given by simple braids of the same length. More generally, the Cayley graph of Coxeter groups are multipartite because multiplication by generators modifies the length of words with $\pm 1$.

\indent b) The graph $\Gamma_{\mathcal{SB}_{n}}$ is canonically embedded in $\Gamma_{\mathcal{SB}_{n+1}}$. The graph
$\Gamma_{\mathcal{SB}_{7}}$ contains $K_{3,3}$ as a subgraph and $\Gamma_{\mathcal{SB}_{6}}$ has a planar imbedding, see the next pictures:

\begin{center}
\begin{picture}(400,440)
\put(198,188){\tiny$\bullet$\tiny$e$}
\put(218,198){\tiny$\bullet$}\put(214,200){\tiny$1$}
\put(188,168){\tiny$\bullet$\tiny$2$}
\put(198,208){\tiny$\bullet$}\put(202,205){\tiny$3$}
\put(208,168){\tiny$\bullet$}\put(205,168){\tiny$4$}
\put(178,198){\tiny$\bullet$\tiny$5$}
\put(238,198){\tiny$\bullet$}\put(232,202){\tiny$12$}
\put(178,148){\tiny$\bullet$\tiny$21$}
\put(218,218){\tiny$\bullet$}\put(222,215){\tiny$13$}
\put(158,159){\tiny$\bullet$}\put(162,157){\tiny$23$}
\put(188,228){\tiny$\bullet$\tiny$32$}
\put(228,178){\tiny$\bullet$}\put(218,178){\tiny$14$}
\put(198,148){\tiny$\bullet$\tiny$24$}
\put(208,228){\tiny$\bullet$\tiny$34$}
\put(238,159){\tiny$\bullet$}\put(232,157){\tiny$43$}
\put(198,368){\tiny$\bullet$}\put(198,372){\tiny$15$}
\put(168,178){\tiny$\bullet$\tiny$25$}
\put(178,218){\tiny$\bullet$\tiny$35$}
\put(218,148){\tiny$\bullet$\tiny$45$}
\put(158,198){\tiny$\bullet$}\put(158,202){\tiny$54$}
\put(268,198){\tiny$\bullet$}\put(260,202){\tiny$123$}
\put(258,238){\tiny$\bullet$\tiny$132$}
\put(158,128){\tiny$\bullet$\tiny$213$}
\put(178,248){\tiny$\bullet$\tiny$321$}
\put(248,178){\tiny$\bullet$}\put(250,182){\tiny$124$}
\put(228,238){\tiny$\bullet$\tiny$134$}
\put(138,128){\tiny$\bullet$\tiny$234$}
\put(198,248){\tiny$\bullet$\tiny$324$}
\put(188,128){\tiny$\bullet$\tiny$214$}
\put(308,98){\tiny$\bullet$\tiny$143$}
\put(198,108){\tiny$\bullet$\tiny$243$}
\put(258,128){\tiny$\bullet$\tiny$432$}
\put(278,218){\tiny$\bullet$}\put(265,218){\tiny$125$}
\put(88,98){\tiny$\bullet$}\put(75,98){\tiny$235$}
\put(118,218){\tiny$\bullet$\tiny$154$}
\put(218,248){\tiny$\bullet$\tiny$345$}
\put(128,198){\tiny$\bullet$}\put(128,202){\tiny$543$}
\put(68,58){\tiny$\bullet$}\put(65,50){\tiny$215$}
\put(168,238){\tiny$\bullet$\tiny$325$}
\put(208,128){\tiny$\bullet$\tiny$245$}
\put(138,238){\tiny$\bullet$\tiny$354$}
\put(198,348){\tiny$\bullet$}\put(203,350){\tiny$135$}
\put(328,58){\tiny$\bullet$\tiny$145$}
\put(148,178){\tiny$\bullet$}\put(138,182){\tiny$254$}
\put(238,128){\tiny$\bullet$\tiny$435$}
\put(288,178){\tiny$\bullet$\tiny$1243$}
\put(298,188){\tiny$\bullet$}\put(300,190){\tiny$1234$}
\put(268,258){\tiny$\bullet$\tiny$1324$}
\put(318,78){\tiny$\bullet$\tiny$1432$}
\put(138,108){\tiny$\bullet$\tiny$2134$}
\put(188,88){\tiny$\bullet$}\put(172,88){\tiny$2143$}
\put(188,268){\tiny$\bullet$\tiny$3214$}
\put(278,108){\tiny$\bullet$\tiny$4321$}
\put(308,208){\tiny$\bullet$\tiny$1235$}
\put(158,258){\tiny$\bullet$\tiny$3215$}
\put(118,108){\tiny$\bullet$\tiny$2345$}
\put(108,178){\tiny$\bullet$}\put(90,178){\tiny$2543$}
\put(128,258){\tiny$\bullet$\tiny$3254$}
\put(98,188){\tiny$\bullet$}\put(80,188){\tiny$5432$}
\put(78,78){\tiny$\bullet$}\put(62,78){\tiny$2354$}
\put(278,158){\tiny$\bullet$}\put(278,162){\tiny$1245$}
\put(198,388){\tiny$\bullet$\tiny$1254$}
\put(198,28){\tiny$\bullet$\tiny$2145$}
\put(118,158){\tiny$\bullet$}\put(102,162){\tiny$2154$}
\put(238,258){\tiny$\bullet$\tiny$1345$}
\put(98,258){\tiny$\bullet$\tiny$1354$}
\put(268,68){\tiny$\bullet$}\put(263,62){\tiny$1435$}
\put(88,208){\tiny$\bullet$}\put(70,208){\tiny$1543$}
\put(208,88){\tiny$\bullet$\tiny$2435$}
\put(208,268){\tiny$\bullet$\tiny$3245$}
\put(258,108){\tiny$\bullet$\tiny$4325$}
\put(298,258){\tiny$\bullet$\tiny$1325$}
\put(128,68){\tiny$\bullet$\tiny$2135$}
\put(328,178){\tiny$\bullet$\tiny$12345$}
\put(198,408){\tiny$\bullet$\tiny$12543$}
\put(198,328){\tiny$\bullet$\tiny$13254$}
\put(58,198){\tiny$\bullet$}\put(35,198){\tiny$15432$}
\put(88,68){\tiny$\bullet$}\put(88,73){\tiny$21354$}
\put(118,278){\tiny$\bullet$\tiny$32154$}
\put(68,178){\tiny$\bullet$}\put(50,178){\tiny$54321$}
\put(88,158){\tiny$\bullet$}\put(68,158){\tiny$21543$}
\put(308,158){\tiny$\bullet$\tiny$12435$}
\put(278,278){\tiny$\bullet$}\put(252,278){\tiny$\bullet$\tiny$13245$}
\put(308,68){\tiny$\bullet$\tiny$14325$}
\put(118,88){\tiny$\bullet$\tiny$21345$}
\put(198,288){\tiny$\bullet$\tiny$32145$}
\put(278,88){\tiny$\bullet$}\put(265,82){\tiny$43215$}
\put(198,68){\tiny$\bullet$\tiny$21435$}
\put(338,198){\tiny$\bullet$\tiny$12354$}
\put(180,370){\line(1,0){40}}
\put(180,350){\line(1,0){40}}
\put(100,221){\line(1,0){20}}
\put(160,221){\line(1,0){20}}
\put(220,221){\line(1,0){20}}
\put(280,221){\line(1,0){20}}
\put(130,200){\line(1,0){50}}
\put(220,200){\line(1,0){50}}
\put(110,180){\line(1,0){60}}
\put(230,180){\line(1,0){60}}
\put(90,160){\line(1,0){30}}
\put(280,160){\line(1,0){30}}
\put(90,70){\line(1,0){40}}
\put(270,70){\line(1,0){40}}
\put(70,60){\line(1,0){65}}
\put(265,60){\line(1,0){65}}
\multiput(20,260)(360,0){2}{\line(0,1){40}}
\multiput(40,260)(320,0){2}{\line(0,1){40}}
\multiput(60,260)(280,0){2}{\line(0,1){40}}
\multiput(70,265)(260,0){2}{\line(0,1){30}}
\multiput(80,270)(240,0){2}{\line(0,1){20}}
\multiput(100,260)(200,0){2}{\line(0,1){20}}
\multiput(180,200)(40,0){2}{\line(0,1){20}}
\put(200,190){\line(0,1){20}}
\multiput(160,130)(80,0){2}{\line(0,1){30}}
\multiput(140,110)(120,0){2}{\line(0,1){20}}
\multiput(120,90)(160,0){2}{\line(0,1){20}}
\multiput(190,90)(20,0){2}{\line(0,1){40}}
\multiput(200,30)(0,80){2}{\line(0,1){40}}
\put(200,390){\line(0,1){20}}
\put(200,350){\line(0,1){20}}

\put(220,250){\line(2,1){60}}\put(180,250){\line(-2,1){60}}
\put(210,230){\line(2,1){60}}\put(190,230){\line(-2,1){60}}
\put(200,210){\line(2,1){120}}\put(200,210){\line(-2,1){120}}
\put(240,220){\line(2,1){90}}\put(160,220){\line(-2,1){90}}
\put(220,200){\line(2,1){120}}\put(180,200){\line(-2,1){120}}
\put(240,200){\line(2,1){120}}\put(160,200){\line(-2,1){120}}
\put(300,220){\line(2,1){80}}\put(100,220){\line(-2,1){80}}
\put(300,280){\line(-2,1){100}}\put(100,280){\line(2,1){100}}
\put(320,290){\line(-2,1){120}}\put(80,290){\line(2,1){120}}
\put(330,295){\line(-2,1){110}}\put(70,295){\line(2,1){110}}
\put(340,300){\line(-2,1){140}}\put(60,300){\line(2,1){140}}
\put(360,300){\line(-2,1){140}}\put(40,300){\line(2,1){140}}
\put(380,300){\line(-2,1){180}}\put(20,300){\line(2,1){180}}

\put(260,240){\line(1,2){20}}
\put(220,220){\line(1,2){20}}
\put(200,210){\line(1,2){20}}
\put(190,230){\line(1,2){20}}
\put(180,250){\line(1,2){20}}
\put(220,250){\line(-1,2){20}}
\put(210,230){\line(-1,2){20}}
\put(200,210){\line(-1,2){20}}
\put(180,220){\line(-1,2){20}}
\put(140,240){\line(-1,2){20}}

\put(200,190){\line(2,1){20}}
\put(200,190){\line(-2,1){20}}
\put(200,190){\line(-1,-2){65}}
\put(200,190){\line(1,-2){65}}
\put(150,180){\line(-3,-2){120}}
\put(250,180){\line(3,-2){120}}
\put(170,180){\line(-3,-2){120}}
\put(230,180){\line(3,-2){120}}
\put(170,180){\line(-1,-1){80}}
\put(230,180){\line(1,-1){80}}
\put(190,170){\line(-4,-1){30}}
\put(210,170){\line(4,-1){30}}
\put(160,160){\line(-1,-1){60}}
\put(240,160){\line(1,-1){60}}
\put(160,160){\line(-2,-3){20}}
\put(240,160){\line(2,-3){20}}
\put(140,130){\line(-1,-1){20}}
\put(260,130){\line(1,-1){20}}
\put(180,150){\line(-1,-1){60}}
\put(220,150){\line(1,-1){60}}
\put(240,130){\line(1,-2){30}}
\put(200,150){\line(-1,-2){30}}
\put(200,150){\line(1,-2){30}}
\put(200,110){\line(-1,-2){10}}
\put(200,110){\line(1,-2){10}}
\put(190,90){\line(1,-2){10}}
\put(210,90){\line(-1,-2){10}}
\put(170,90){\line(1,-2){30}}
\put(230,90){\line(-1,-2){30}}
\put(180,150){\line(1,-2){10}}
\put(220,150){\line(-1,-2){10}}
\put(30,100){\line(1,-1){40}}
\put(370,100){\line(-1,-1){40}}
\put(50,100){\line(1,-2){20}}
\put(350,100){\line(-1,-2){20}}
\put(90,100){\line(4,-3){40}}\qbezier(90,100)(93,100)(98,100)
\put(310,100){\line(-4,-3){40}}\qbezier(300,100)(303,100)(308,100)
\put(90,100){\line(-1,-2){10}}
\put(310,100){\line(1,-2){10}}
\put(80,80){\line(1,-1){10}}
\put(320,80){\line(-1,-1){10}}
\put(190,170){\line(-2,1){20}}
\put(210,170){\line(2,1){20}}
\multiput(160,200)(20,0){2}{\line(-1,-2){10}}
\multiput(220,200)(20,0){2}{\line(1,-2){10}}
\multiput(120,220)(10,-20){2}{\line(-3,-1){60}}
\multiput(280,220)(-10,-20){2}{\line(3,-1){60}}
\put(110,180){\line(-1,-1){20}}
\put(290,180){\line(1,-1){20}}
\put(90,210){\line(4,-1){40}}
\put(310,210){\line(-4,-1){40}}
\put(190,170){\line(1,-2){10}}
\put(210,170){\line(-1,-2){10}}
\put(130,70){\line(-6,-1){60}}
\put(270,70){\line(6,-1){60}}
\put(160,130){\line(-1,-2){30}}
\put(200,0){\textbf{$\Gamma_{\mathcal{SB}_{6}}$}}

\end{picture}
\end{center}
\begin{center}
\begin{picture}(230,100)
\multiput(10,10)(100,0){3}{\line(0,1){80}}
\put(10,10){\line(5,4){100}}\put(10,10){\line(5,2){200}}
\put(110,10){\line(5,4){100}}\put(110,10){\line(-5,4){100}}
\put(210,10){\line(-5,4){100}}\put(210,10){\line(-5,2){200}}
\put(8,8){\tiny$\bullet$\tiny$e$}
\put(8,88){\tiny$\bullet$\tiny$1$}
\put(108,8){\tiny$\bullet$\tiny$136$}
\put(108,88){\tiny$\bullet$\tiny$3$}
\put(208,8){\tiny$\bullet$\tiny$26$}
\put(208,88){\tiny$\bullet$\tiny$6$}
\put(38,64){\tiny$\bullet$\tiny$13$}
\put(110,23){\tiny$\bullet$\tiny$36$}
\put(188,71){\tiny$\bullet$\tiny$16$}
\put(60,68){\tiny$\bullet$\tiny$14$}
\put(90,56){\tiny$\bullet$\tiny$4$}
\put(130,40){\tiny$\bullet$\tiny$24$}
\put(180,20){\tiny$\bullet$}\put(170,15){\tiny$246$}
\put(125,75){\tiny$\bullet$\tiny$35$}
\put(150,55){\tiny$\bullet$\tiny$5$}
\put(170,39){\tiny$\bullet$\tiny$25$}
\put(195,20){\tiny$\bullet$\tiny$2$}
\put(60,-15){\textbf{A $K_{3,3}$ subgraph of $\Gamma_{\mathcal{SB}_{7}}$}}

\end{picture}
\end{center}

\end{proof}
The close relations between simple braids in $\mathcal{SB}_{n}$ and the corresponding permutations in  the symmetric group $\Sigma_{n}$ and also the simple part of the permutahedron (the simple graph $\Gamma_{\mathcal{SB}_{n}}$ is its one dimensional skeleton) are studied in \cite{?}.

\end{document}